\newtheorem{theorem}{Theorem}[section]
\newtheorem{thm}[theorem]{Theorem}
\newtheorem{lem}[theorem]{Lemma}
\newtheorem{remark}[theorem]{Remark}
\newtheorem{proposition}[theorem]{Proposition}
\newtheorem{prop}[theorem]{Proposition}
\newtheorem{corollary}[theorem]{Corollary}
\newtheorem{hyp}[theorem]{HYPOTHESIS}
\theoremstyle{definition}
\newtheorem{defn}[theorem]{Definition}
\newtheorem{example}[theorem]{Example}
\theoremstyle{remark}
\numberwithin{equation}{section}
 \DeclareMathAlphabet{\mathpzc}{OT1}{pzc}{m}{it}
 \DeclareMathAlphabet{\mathsfsl}{OT1}{cmss}{m}{sl}
\newcommand{\dif}{\mathrm{d}}
\newcommand{\Be}{\begin{equation}}
\newcommand{\Ee}{\end{equation}}
\newcommand{\Bs}{\begin{split}}
\newcommand{\Es}{\end{split}}
\newcommand{\Bes}{\begin{equation*}}
\newcommand{\Ees}{\end{equation*}}
\newcommand{\BT}{\begin{thm}}
\newcommand{\ET}{\end{thm}}
\newcommand{\Bp}{\begin{proof}}
\newcommand{\Ep}{\end{proof}}
\newcommand{\BL}{\begin{lem}}
\newcommand{\EL}{\end{lem}}
\newcommand{\BP}{\begin{proposition}}
\newcommand{\EP}{\end{proposition}}
\newcommand{\BC}{\begin{corollary}}
\newcommand{\EC}{\end{corollary}}
\newcommand{\BR}{\begin{remark}}
\newcommand{\ER}{\end{remark}}
\newcommand{\BD}{\begin{defn}}
\newcommand{\ED}{\end{defn}}
\newcommand{\BI}{\begin{itemize}}
\newcommand{\EI}{\end{itemize}}
\begin{document}
\title{ Moment estimator for an AR(1) model with non-zero mean driven by a long memory Gaussian noise}
\author[]{Yanping LU}
 \address{School of Mathematics and Statistics, Jiangxi Normal University, Nanchang, 330022, Jiangxi, China}
\email{luyp@jxnu.edu.cn}
\begin{abstract}

In this paper, we consider an inference problem for the first order autoregressive process with non-zero mean driven by a long memory stationary Gaussian process. Suppose that the covariance function of the noise can be expressed as $|k|^{2H-2}$ times a positive constant when $k$ tends to infinity, and the fractional Gaussian noise and the fractional ARIMA model are special examples that satisfy this assumption. We propose moment estimators and prove the strong consistency, the asymptotic normality and joint asymptotic normality.\par

{\bf Keywords:} Gaussian process; Fourth moment theorems; Asymptotic normality.\\

\end{abstract}
\maketitle

\section{Introduction}\label{I1}
For the first order autoregressive model $(X_t, t \in \mathbb{N})$ driven by a given noise sequence $\xi = (\xi_t, t \in \mathbb{Z})$:
\begin{equation}\label{AR1}
X_t=\alpha+\theta X_{t-1} +\xi_t, \quad t \in \mathbb{N}
\end{equation}with $X_0 = 0$, the inference problem regarding the parameter $\theta$ has been extensively studied in probability and statistics literatures. For when $\xi$ is independent identical distribution or a
martingale difference sequence, this problem has been widely studied over the past decades (see
\cite{Andeson1979}, \cite{Lai1983} and the references therein). Parameter estimation of AR(1) model driven by i.i.d. sequence
with non-zero mean was considered in \cite{Mami2013}.

The maximum likelihood estimator of AR(p) was investigated in \cite{Brouste2014} for regular stationary
Gaussian noise, in which they transform the observation model into an "equivalent" model with
Gaussian white noise. In \cite{Brouste2014}, it is pointed out that for the strongly dependent noises the least
square estimator is generally not consistent. In case of long-memory noise, the detection of a
change of the above parameter $\theta$ is studied by means of the likelihood ratio test \cite{Brouste2020}. \cite{Chen2023} consider
second moment estimator of $\theta$ in AR(1) model with zero mean driven by a long memory Gaussian
noise.

In this paper, we will discuss the long-range dependence Gaussian noise case and propose moment estimators of $\theta$ and $\alpha$. First, we find that it is very convenient to construct moment estimators when we restrict the domain of the parameter $\theta$ in
$$\Theta=\{ \theta \in \mathbb{R} ~|~ 0<\theta<1 \}.$$
It seems that this restriction is very reasonable for real-world context sometimes. In fact, $|\theta|< 1$ is an assumption to ensure the model \eqref{AR1} to have a stationary solution. We rule out the case of $-1 < \theta < 0$ in which the series tends to oscillate rapidly. We also rule out the case of $\theta=0$ in which $X_t$ is not an autoregressive model any more.

Next, we assume that the stationary Gaussian noise $\xi$ satisfies the following Hypothesis~\ref{hyp1}:
\begin{hyp}\label{hyp1}
The covariance function $R_{\xi}(k) = E(\xi_0 \xi_k)$ for any $k \in \mathbb{Z}$ satisfies
\begin{equation}\label{hyp11}
R_{\xi}(k) \rightarrow C|k|^{2H-2}, \quad \text{as} ~k \to \infty,
\end{equation}where $H\in (\frac 12,1)$ and $C$ is a position constant.
\end{hyp}
We mentioned that in order to simplify the calculation in this paper, we decided to omit the slowly varying function $L$; in fact, while this function is sometimes tedious to deal with from a mathematical point of view, it is of less importance from a philosophical point of view.

It is well-known that Eq.~\eqref{hyp11} is equivalent to the spectral density of $\xi$ satisfying
$$h_{\xi}(\lambda) \backsim C_H |\lambda|^{1-2H}, \quad \text{as}~\lambda \to 0,$$
with $C_H = \pi^{-1}\Gamma(2H-1) \sin(\pi -\pi H)$. Please refer to \cite{Beran2013} or Lemma 2.2 below.

We will see that the fractional Gaussian noise and the fractional ARIMA model driven by Gaussian white noise are special examples satisfying Hypothesis~\ref{hyp1}.

We will replace $X_t$ in an AR(1) model with zero mean to $X_t- \mu$, then
$$X_t-\mu =\theta(X_{t-1}-\mu)+\xi_t,\quad t \in \mathbb{N}.$$
Here, $\mu = \frac {\alpha} {1-\theta}$.

Denote
\begin{align}\label{Yt}
Y_t:=\sum_{j=0}^{\infty} \theta^j \xi_{t-j},
\end{align}When $|\theta| < 1$, the solution to the model \eqref{AR1} with initial value $X_0 = 0$ can be represented as:
\begin{align}\label{Xt}
X_t=\mu +Y_t +\theta^t \zeta,
\end{align}where $\mu + Y_t$ is the stationary solution and $\zeta$ is a normal random variable with zero mean.

It is clear that the second moment of $Y_t$ is:
\begin{equation*}
 f(\theta):= E(Y_t^2)=\sum_{i,j=0}^{\infty}\theta^{i+j} R_{\xi}(i-j).
\end{equation*}When $0 < \theta < 1$, $f(\theta)$ is positive and strictly increasing (see \cite{Chen2023} (5)). Denote $\bar{X} := \frac 1n \sum_{t=1}^{n} X_t$ and $f^{-1}(\cdot)$ is the inverse function of $f(\cdot)$.

We propose the second moment estimator of $\theta$ as:
\begin{align}\label{thetaestimator}
\hat{\theta}_n = f^{-1} \left(\frac 1n \sum_{t=1}^n(X_t-\bar{X})^2 \right)
\end{align}and the moment estimator of $\alpha$ as
\begin{align}\label{hatalpha}
\hat{\alpha}_n=(1-\hat{\theta}_n)\bar{X}.
\end{align}

In this paper, we will show the strong consistency and give the asymptotic distribution for $\hat{\alpha}_n$ and $\hat{\theta}_n$. Moreover, we also give the joint asymptotic normality. These results are stated in the following theorems:
\begin{thm}\label{consistent}
Under Hypothesis~\ref{hyp1}, the estimator $\hat{\theta}_n$ and $\hat{\alpha}_n$ is strongly consistent, i.e.,
\begin{equation*}
    \lim_{n\to \infty}\hat{\theta}_n=\theta \quad \text{a.s.}; \quad \quad \lim_{n\to \infty}\hat{\alpha}_n=\alpha \quad \text{a.s.}.
\end{equation*}
\end{thm}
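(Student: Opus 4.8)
The plan is to reduce the strong consistency of both estimators to two almost sure limit statements about the observed sample, namely $\bar X \to \mu$ and $\frac1n\sum_{t=1}^n (X_t-\bar X)^2 \to f(\theta)$, and then to invoke the continuity of $f^{-1}$. Using the representation \eqref{Xt}, write $X_t = \mu + Y_t + \theta^t\zeta$ with $\{Y_t\}$ the stationary Gaussian sequence from \eqref{Yt}. Expanding the sample mean gives $\bar X = \mu + \frac1n\sum_{t=1}^n Y_t + \frac{\zeta}{n}\sum_{t=1}^n\theta^t$; the transient term is $O(1/n)$ because $0<\theta<1$ makes $\sum_{t=1}^n\theta^t$ bounded, so everything rests on showing $\frac1n\sum_{t=1}^n Y_t\to 0$ a.s.

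First I would establish that $\{Y_t\}$ is ergodic. Its covariance is $r_Y(k)=\mathrm{Cov}(Y_0,Y_k)=\sum_{i,j\ge0}\theta^{i+j}R_\xi(k+i-j)$, and since $R_\xi$ is bounded by $R_\xi(0)$ and tends to $0$ at infinity under Hypothesis~\ref{hyp1}, dominated convergence in the summable weights $\theta^{i+j}$ yields $r_Y(k)\to 0$ as $k\to\infty$. For a mean-zero stationary Gaussian sequence this decay of the covariance forces the spectral measure to have no atoms (equivalently $\frac1n\sum_{k=1}^n r_Y(k)^2\to0$), which is exactly the criterion for ergodicity. Birkhoff's ergodic theorem then applies to $\{Y_t\}$ and, since $E(Y_0^2)=f(\theta)<\infty$, to $\{Y_t^2\}$, giving $\frac1n\sum_{t=1}^n Y_t\to E(Y_0)=0$ and $\frac1n\sum_{t=1}^n Y_t^2\to E(Y_0^2)=f(\theta)$ almost surely. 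In particular $\bar X\to\mu$ a.s.

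For the empirical second moment I would use $\frac1n\sum_{t=1}^n(X_t-\bar X)^2 = \frac1n\sum_{t=1}^n X_t^2 - \bar X^2$ and expand $X_t^2$ via \eqref{Xt}. The dominant contributions are $\mu^2$ and $\frac1n\sum_{t=1}^n Y_t^2\to f(\theta)$; the remaining terms all vanish almost surely: those carrying a factor $\theta^{2t}$ or $\theta^t$ are killed by the bounded geometric sums divided by $n$, the term $\frac1n\sum_{t=1}^n Y_t\to0$ by the above, and the cross term $\frac{\zeta}{n}\sum_{t=1}^n\theta^t Y_t\to0$ because $\sum_{t\ge1}\theta^t|Y_t|$ has finite expectation $E|Y_0|\frac{\theta}{1-\theta}$ and is therefore a.s. finite. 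Subtracting $\bar X^2\to\mu^2$ leaves $\frac1n\sum_{t=1}^n(X_t-\bar X)^2\to \mu^2+f(\theta)-\mu^2=f(\theta)$ a.s. Because $f$ is continuous and strictly increasing on $(0,1)$, its inverse is continuous on the range, so $\hat\theta_n=f^{-1}(\cdot)\to f^{-1}(f(\theta))=\theta$ a.s. by \eqref{thetaestimator}; finally $\hat\alpha_n=(1-\hat\theta_n)\bar X\to(1-\theta)\mu=\alpha$ a.s. by \eqref{hatalpha} and $\mu=\alpha/(1-\theta)$.

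The main obstacle is the ergodicity step: one must justify that the mere decay $r_Y(k)\to0$, rather than summability, suffices to apply Birkhoff to both $Y_t$ and $Y_t^2$, which is where the Gaussian structure is essential, since for non-Gaussian stationary sequences covariance decay alone would not guarantee ergodicity. Everything else is bookkeeping on the transient and cross terms, which are controlled purely by $0<\theta<1$ together with the a.s. finiteness of $\zeta$ and of $\sum_{t\ge1}\theta^t|Y_t|$.
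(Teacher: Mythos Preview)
Your proposal is correct and follows essentially the same route as the paper: establish ergodicity of the stationary Gaussian sequence $\{Y_t\}$, apply Birkhoff to get $\frac1n\sum Y_t\to0$ and $\frac1n\sum Y_t^2\to f(\theta)$, dispose of the transient $\theta^t\zeta$-terms using $0<\theta<1$, and conclude via continuity of $f^{-1}$ and of $(x,y)\mapsto(1-y)x$. The only differences are cosmetic: the paper cites \cite{Chen2023} for the ergodicity of $\{Y_t\}$ and for $\frac1n\sum(X_t-\mu)^2\to f(\theta)$ rather than arguing them directly, and it uses the slightly cleaner identity $\frac1n\sum(X_t-\bar X)^2=\frac1n\sum(X_t-\mu)^2-(\bar X-\mu)^2$ instead of expanding $X_t^2$, which spares the explicit treatment of the cross term $\frac{\zeta}{n}\sum\theta^t Y_t$.
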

\begin{thm}\label{asymptotic}
Under Hypothesis~\ref{hyp1} and suppose that $H\in (\frac 12, \frac 34 )$. Then, we have the following asymptotic distribution as $n\to \infty$:
\begin{enumerate}
  \item[(1)] asymptotic normality of $\hat{\theta}_n$
  \begin{equation*}
      \sqrt{n}(\hat{\theta}_n -\theta) \xrightarrow{law} \mathcal{N} \left(0, \frac{\sigma_H^2}{[f'(\theta)]^2} \right),
  \end{equation*}where $\sigma_H^2=2\sum_{k\in\mathbb{Z}}R_Y^2(k)$, $R_Y(k)=E(Y_t Y_{t+k})$ and $f'(\theta)$ is the derivative of $f(\theta)$;
  \item [(2)] asymptotic normality of $\hat{\alpha}_n$
  \begin{equation*}
      n^{1-H}(\hat{\alpha}_n-\alpha) \xrightarrow{law} \mathcal{N}(0,\sigma_1^2),
  \end{equation*}where $\sigma_1^2=[H(2H-1)]^{-1}\pi^{-1}B(2H-1,2-2H)\sin(2\pi H-\pi)$ and $B$ is the Beta function;
  \item [(3)] Denote $G_{n,1}:=\sqrt{n}(\hat{\theta}_n-\theta)$ and $G_{n,2}:=n^{1-H}(\hat{\alpha}_n-\alpha)$,  then we have the joint asymptotic normality $$(G_{n,1},G_{n,2})\xrightarrow{law}\mathcal{N}_2(0,\Sigma),$$ where
  $$\Sigma = \begin{pmatrix}
 \frac{\sigma_H^2}{[f'(\theta)]^2} & 0 \\
 0 & \sigma_1^2
\end{pmatrix}.$$
\end{enumerate}
\end{thm}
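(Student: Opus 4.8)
The plan is to reduce all three limits to central limit theorems for explicit functionals of the driving Gaussian noise, via the decomposition $X_t=\mu+Y_t+\theta^t\zeta$ from \eqref{Xt} together with the delta method. Set $Q_n=\frac1n\sum_{t=1}^n(X_t-\bar X)^2$, so that $\hat\theta_n=f^{-1}(Q_n)$. First I would show that the transient term $\theta^t\zeta$ is asymptotically negligible: writing $X_t-\bar X=(Y_t-\bar Y)+a_t$ with $a_t=\zeta(\theta^t-\frac1n\sum_s\theta^s)$, the contribution $\frac1n\sum a_t^2$ is $O_{P}(1/n)$, and the cross term $\frac2n\sum(Y_t-\bar Y)a_t$ collapses to $\frac{2\zeta}{n}\sum_t(Y_t-\bar Y)\theta^t=O_{P}(1/n)$ after using the centering identity $\sum_t(Y_t-\bar Y)=0$. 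Hence $Q_n=\frac1n\sum_{t=1}^nY_t^2-\bar Y^2+o_{P}(n^{-1/2})$. Because $Y$ is stationary Gaussian with $R_Y(k)\sim c|k|^{2H-2}$ one has $\operatorname{Var}(\bar Y)\asymp n^{2H-2}$, so $\sqrt n\,\bar Y^2=O_{P}(n^{2H-3/2})=o_{P}(1)$ exactly because $H<3/4$; thus the fluctuations of $Q_n$ are carried by the quadratic functional $\frac1{\sqrt n}\sum_{t=1}^n(Y_t^2-f(\theta))$.

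For part (1) I would apply the Breuer--Major (fourth moment) theorem to this functional. Writing $Y_t^2-f(\theta)=f(\theta)H_2(Y_t/\sqrt{f(\theta)})$ with $H_2$ the second Hermite polynomial, the Hermite rank is $2$, so the relevant summability condition is $\sum_k R_Y(k)^2<\infty$; since $R_Y(k)^2\sim c^2|k|^{4H-4}$ this holds precisely for $H<3/4$, which is where the hypothesis on the range of $H$ is used. This gives $\frac1{\sqrt n}\sum_{t=1}^n(Y_t^2-f(\theta))\xrightarrow{law}\mathcal N(0,\sigma_H^2)$ with $\sigma_H^2=2\sum_{k\in\mathbb Z}R_Y(k)^2$, hence $\sqrt n(Q_n-f(\theta))\xrightarrow{law}\mathcal N(0,\sigma_H^2)$. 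As $f$ is strictly increasing and differentiable (so $f^{-1}$ is differentiable at $f(\theta)$ with derivative $1/f'(\theta)$) and $Q_n\to f(\theta)$ a.s. by Theorem~\ref{consistent}, the delta method yields $\sqrt n(\hat\theta_n-\theta)\xrightarrow{law}\mathcal N(0,\sigma_H^2/[f'(\theta)]^2)$.

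For part (2) the key is the summation identity for the AR recursion $Y_t=\theta Y_{t-1}+\xi_t$, namely $(1-\theta)\sum_{t=1}^nY_t=\sum_{t=1}^n\xi_t+\theta(Y_0-Y_n)$. Since $\bar X=\mu+\bar Y+O_{P}(1/n)$ and $\alpha=(1-\theta)\mu$, a short computation gives $\hat\alpha_n-\alpha=\mu(\theta-\hat\theta_n)+(1-\hat\theta_n)\bar Y+O_{P}(1/n)$. Multiplying by $n^{1-H}$, the term $\mu n^{1-H}(\theta-\hat\theta_n)$ is $O_{P}(n^{1/2-H})=o_{P}(1)$ because $H>1/2$ and $\hat\theta_n-\theta=O_{P}(n^{-1/2})$ by part (1); and since $\hat\theta_n\to\theta$ a.s. while $n^{-H}(Y_0-Y_n)\to0$ in probability, Slutsky's theorem reduces the claim to the CLT $n^{-H}\sum_{t=1}^n\xi_t\xrightarrow{law}\mathcal N(0,\sigma_1^2)$. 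As $\xi$ is Gaussian the partial sum is Gaussian, so it suffices to compute $\lim_n n^{-2H}\operatorname{Var}\big(\sum_{t=1}^n\xi_t\big)=\lim_n n^{-2H}\sum_{|k|<n}(n-|k|)R_\xi(k)$; inserting $R_\xi(k)\sim C|k|^{2H-2}$ and matching the limiting integral with the spectral relation $h_\xi(\lambda)\sim C_H|\lambda|^{1-2H}$ identifies this limit as the stated $\sigma_1^2$.

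Part (3) follows by assembling the two analyses at the level of Wiener chaos. The dominant part of $G_{n,1}$ is the second-chaos functional $\frac1{f'(\theta)\sqrt n}\sum(Y_t^2-f(\theta))$, while that of $G_{n,2}$ is the first-chaos functional $n^{-H}\sum\xi_t$; since elements of distinct Wiener chaoses are orthogonal, the cross-covariance $E[(Y_t^2-f(\theta))\xi_s]=0$ identically (a third moment of centered jointly Gaussian variables), so the asymptotic covariance matrix is exactly the diagonal matrix $\Sigma$. With marginal convergence already in hand, I would invoke the multivariate fourth moment theorem of Peccati--Tudor: componentwise convergence to normality together with convergence of the covariance matrix to $\Sigma$ forces $(G_{n,1},G_{n,2})\xrightarrow{law}\mathcal N_2(0,\Sigma)$, the negligible remainders being absorbed by Slutsky's theorem. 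The main obstacle I anticipate is the careful bookkeeping of remainder terms at the two incompatible scalings $\sqrt n$ and $n^{1-H}$---verifying that the demeaning by $\bar X$, the transient $\theta^t\zeta$, and the plug-in error $\hat\theta_n-\theta$ all vanish at the right rate, which is exactly where both endpoints of $H\in(1/2,3/4)$ enter---together with the exact evaluation of $\sigma_1^2$ from the Ces\`aro sum $\sum_{|k|<n}(n-|k|)R_\xi(k)$.
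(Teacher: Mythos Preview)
Your proposal is correct and follows the paper's overall architecture: reduce to the quadratic functional $\frac{1}{\sqrt n}\sum(Y_t^2-f(\theta))$ via a decomposition of $X_t-\bar X$, apply a Hermite-rank-2 CLT (Breuer--Major/fourth-moment) for part (1), handle $\hat\alpha_n$ via $\hat\alpha_n-\alpha=(\theta-\hat\theta_n)\bar X+(1-\theta)(\bar X-\mu)$ together with $\sqrt n(\hat\theta_n-\theta)=O_P(1)$ for part (2), and use orthogonality of first and second Wiener chaos plus the Peccati--Tudor multivariate fourth-moment theorem for part (3). Your bookkeeping of the transient $\theta^t\zeta$ and of $\sqrt n\,\bar Y^2=O_P(n^{2H-3/2})$ matches the paper's (the latter is exactly equation~\eqref{Zn8}).

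The one genuine difference is in part (2). The paper proves Proposition~\ref{prop2} by working directly with $Z_n=n^{-H}\sum Y_t$ and computing $E[Z_n^2]$ from the derived asymptotics $R_Y(k)\sim C_{\theta,H}|k|^{2H-2}$ of Proposition~\ref{prop1}. You instead telescope the stationary recursion $Y_t=\theta Y_{t-1}+\xi_t$ to get $(1-\theta)\sum_{t=1}^n Y_t=\sum_{t=1}^n\xi_t+\theta(Y_0-Y_n)$, so that the limit variance comes straight from the Ces\`aro sum of $R_\xi$ and Hypothesis~\ref{hyp1}, bypassing the need to transfer long-memory asymptotics from $\xi$ to $Y$. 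Your route is slightly more economical (it uses the hypothesis on $R_\xi$ directly and makes the factor $(1-\theta)^{-2}$ in the paper's $C_{\theta,H}$ transparent), while the paper's route has the advantage that the spectral/covariance behavior of $Y$ is recorded explicitly, which may be of independent interest. Both lead to the same $\sigma_1^2$, and your version of part (3) with $n^{-H}\sum\xi_t$ in place of $Z_n$ is equally valid since the two differ by $o_P(1)$ and both lie in the first chaos.
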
Next, we give some sequences that satisfy Hypothesis~\ref{hyp1}.
\begin{example}
The fractional Gaussian noise with covariance function
$$R_{\xi}=\frac 12 (|k+1|^{2H}+|k-1|^{2H} - 2|k|^{2H}), \quad k\in \mathbb{Z}$$satisfies Hypothesis~\ref{hyp1} when $H > \frac 12$. It is well-known that $R_{\xi} \backsim H(2H -1)|k|^{2H-2}$ as $k \to \infty$.
\end{example}
\begin{example}
The fractional ARIMA(0,d,0) model driven by a Gaussian white noise satisfies Hypothesis~\ref{hyp1} when $d \in (0,\frac 12 )$. It is well-known that its the covariance function spectral density satisfy
$$R_{\xi}(k)=\sigma \frac{\Gamma(k+d)\Gamma(1-2d)}{\Gamma(k+1-d)\Gamma(1-d)\Gamma(d)},\quad k \ge 0 , $$
$$ h_{\xi}(\lambda)=\frac{\sigma}{2\pi}|2\sin(\lambda/2)|^{-2d}, \quad \lambda \to 0 , $$ where $\Gamma(\cdot)$ is Gamma function.
\end{example}
\begin{figure}[htbp]
    \centering
    \subfigure[a]{
    \includegraphics[width=5.5cm]{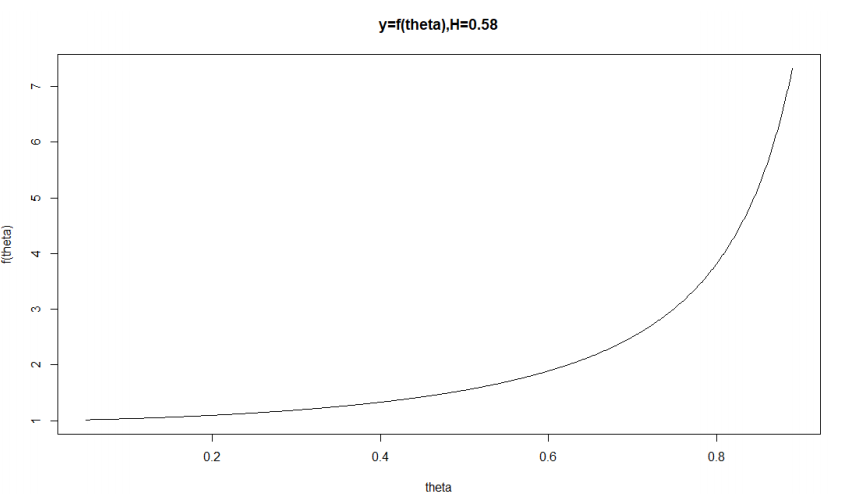}
    }
    \subfigure[b]{
    \includegraphics[width=5.5cm]{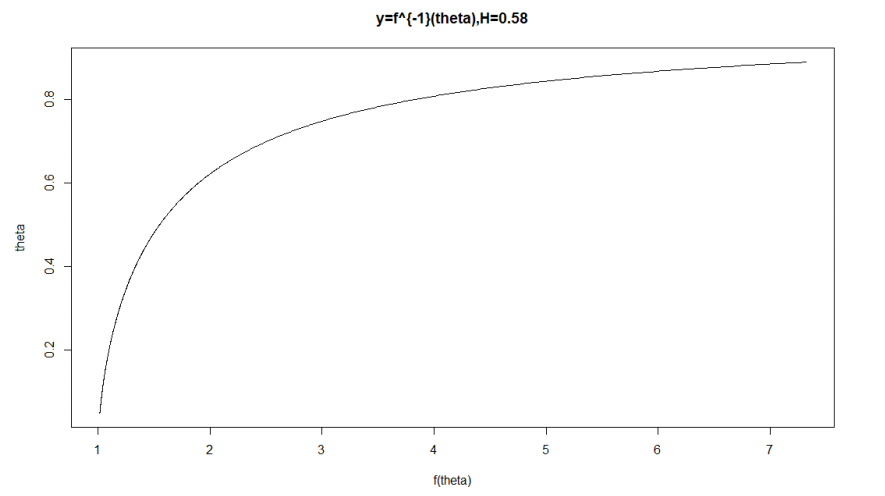}
    }

    \subfigure[c]{
    \includegraphics[width=5.5cm]{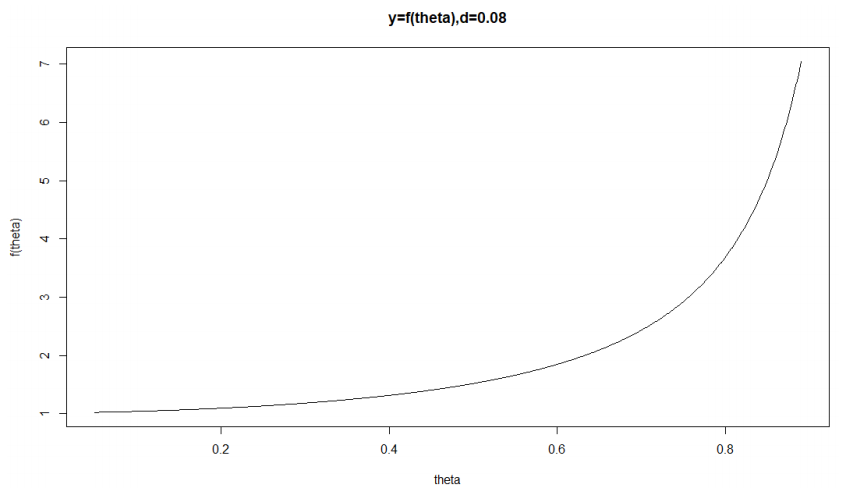}
    }
    \subfigure[d]{
    \includegraphics[width=5.5cm]{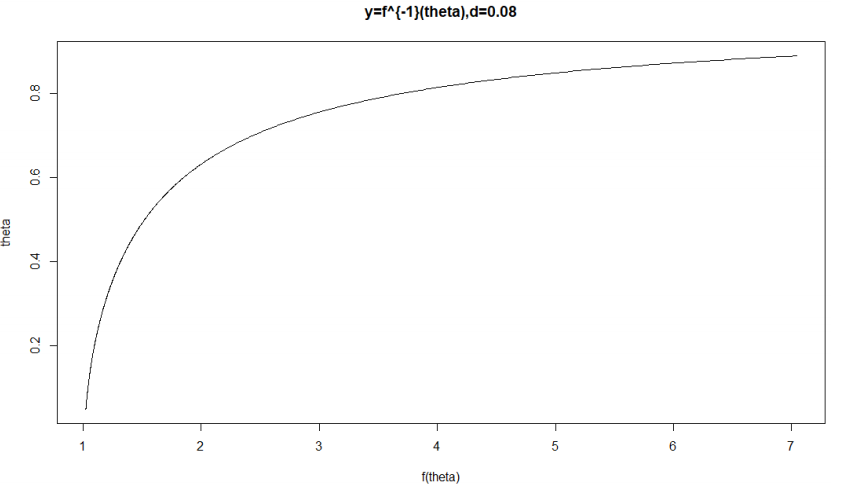}
    }
    \caption{The figures of the function $y = f(\theta)$ and the inverse function $\theta = f^{-1}(\theta)$ for fGn with Hurst parameter $H = 0.58$ ((a) and (b), respectively) and ARFIMA(0, d, 0) with $d = 0.08$ ((c) and (d), respectively) respectively.}
    \label{tu1}
\end{figure}
Here, we give the figure.\ref{tu1} of the function $y = f(\theta)$ in the interval $\theta \in (0, 1)$ and the inverse function $\theta = f^{-1}(\theta)$ with $y > 0$ for fGn with Hurst parameter $H = 0.58$ and ARFIMA(0, d, 0) with $d = 0.08$ respectively.

In the remainder of this paper, $C$ will be a generic positive constant independent of $n$ the value of which may differ from line to line.

\section{Preliminary}\label{I3}
In this section, we list the main definitions and theorems that is used to show our results. The following definition is cited from Definition 1.2 of \cite{Beran2013} respectively.
\begin{defn}\label{defn1}
Let $\{ \xi_t \}$ be a second-order stationary process with autocovariance function $R_{\xi}(k)(k \in \mathbb{Z})$ and spectral density
$$h_{\xi}(\lambda)=(2\pi)^{-1} \sum_{k=-\infty}^{\infty} R_{\xi}(k)\exp(-i k\lambda), \quad \lambda \in [-\pi , \pi].$$ Then $\{ \xi_t \}$ is said to exhibit linear long-range dependence if
$$h_{\xi}(\lambda)= C_H |\lambda|^{1-2H},$$ where $H \in (\frac 12 , 1)$.
\end{defn}
The following theorem is well-known and is cited from Theorem 1.3 of \cite{Beran2013}:
\begin{thm}
Let $R_{\xi}(k)(k \in \mathbb{Z})$ and $h(\lambda)(\lambda \in [-\pi, \pi])$ be the autocovariance function and spectral density respectively of a second-order stationary process $\{ \xi_t \}$. Then the following holds:
\begin{enumerate}
  \item[(1)] If
  \begin{equation*}
      R_{\xi}(k)=C|k|^{2H-2},\quad k\in \mathbb{Z},
  \end{equation*}where $H \in (\frac 12,1)$, then
  $$h(\lambda)\backsim C_H |\lambda|^{1-2H}, \quad \lambda \to 0,$$where
  $$C_H=\pi^{-1}\Gamma(2H-1)\sin(\pi - \pi H).$$
 \item[(2)] If
  $$h(\lambda)= C' |\lambda|^{1-2H}, \quad 0<\lambda <\pi,$$where $H \in (\frac 12,1)$, then
  \begin{equation*}
      R_{\xi}(k)\backsim C'_H|k|^{2H-2},\quad k\to \infty,
  \end{equation*}
  where
  $$C'_H=2\Gamma(2-2H)\sin(\pi H - \frac 12 \pi).$$
\end{enumerate}
\end{thm}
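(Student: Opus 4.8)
The plan is to treat the two implications as direct (Abelian-type) computations, since in each case one of the two quantities is given exactly rather than only asymptotically. I would isolate a single analytic input, the classical oscillatory Gamma integral
\begin{equation*}
\int_0^\infty u^{a-1}\cos u\,\dif u=\Gamma(a)\cos\!\big(\tfrac{\pi a}{2}\big),\qquad 0<a<1,
\end{equation*}
whose convergence follows from integrability of $u^{a-1}$ at the origin and one integration by parts at infinity. All the trigonometric constants should then fall out after the substitution $a=2-2H$, using $\cos(\pi-\pi H)=-\cos\pi H=\sin(\pi H-\tfrac\pi2)$ and $\cos\big(\tfrac{\pi(1-s)}2\big)=\sin\big(\tfrac{\pi s}2\big)$.

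For part (2) I would start from the inversion formula $R_{\xi}(k)=\int_{-\pi}^{\pi}h(\lambda)e^{ik\lambda}\,\dif\lambda=2C'\int_0^{\pi}\lambda^{1-2H}\cos(k\lambda)\,\dif\lambda$ and rescale by $u=k\lambda$ to get $R_{\xi}(k)=2C'k^{2H-2}\int_0^{k\pi}u^{1-2H}\cos u\,\dif u$. As $k\to\infty$ the truncated integral converges to $\int_0^\infty u^{1-2H}\cos u\,\dif u$, and the tail $\int_{k\pi}^\infty$ is $O\big((k\pi)^{1-2H}\big)=o(1)$ by one integration by parts, which makes the limit rigorous. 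Evaluating with $a=2-2H$ yields $R_{\xi}(k)\sim C'\cdot 2\Gamma(2-2H)\sin(\pi H-\tfrac\pi2)\,k^{2H-2}$, i.e.\ the claimed $C_H'|k|^{2H-2}$.

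Part (1) is the reverse direction and is the one I expect to be delicate. I would write $h(\lambda)=(2\pi)^{-1}\big[R_{\xi}(0)+2C\sum_{k\ge1}k^{2H-2}\cos(k\lambda)\big]$, the $k=0$ term being a bounded nuisance negligible against the divergence we are chasing, and reduce everything to the asymptotics of $S(\lambda):=\sum_{k\ge1}k^{-s}\cos(k\lambda)$ as $\lambda\to0^+$ with $s=2-2H\in(0,1)$. Comparison with $\int_0^\infty x^{-s}\cos(x\lambda)\,\dif x=\lambda^{s-1}\int_0^\infty u^{-s}\cos u\,\dif u$ suggests $S(\lambda)\sim\Gamma(1-s)\sin(\tfrac{\pi s}2)\,\lambda^{s-1}$, and $s=2-2H$ then reproduces $C_H=\pi^{-1}\Gamma(2H-1)\sin(\pi-\pi H)$ (up to the overall prefactor $C$).

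The main obstacle is precisely justifying that $S(\lambda)$ may be replaced by this integral to leading order, i.e.\ that the difference between sum and integral is $o(\lambda^{s-1})$ and that the singular scale $k\sim1/\lambda$ does not contaminate the constant. I would handle this by Euler--Maclaurin (or Abel) summation applied to $g(x)=x^{-s}\cos(x\lambda)$: the boundary and derivative corrections involve $g(1),g'(1),\dots$, which stay $O(1)$ as $\lambda\to0$ and are therefore lower order than the divergent main term $\lambda^{s-1}$. A conceptually cleaner route I would keep in reserve is Poisson summation against the tempered-distribution Fourier transform of $|x|^{-s}$, which is a constant multiple of $|\xi|^{s-1}$: periodising and retaining only the central copy gives the leading singularity directly, with the remaining copies contributing a bounded remainder. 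Either way the quantitative content is the same, and both singular exponents and both constants trace back to the single oscillatory Gamma integral above.
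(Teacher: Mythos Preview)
The paper does not prove this theorem at all: it is quoted verbatim as a well-known preliminary result, with the attribution ``cited from Theorem~1.3 of \cite{Beran2013}.'' There is no argument in the paper to compare against.

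Your sketch is therefore strictly more than what the paper offers, and the approach you outline is one of the standard ones for this classical Abelian/Tauberian pair. Part~(2) is exactly right: the inversion formula, the rescaling $u=k\lambda$, and a single integration by parts to control the tail $\int_{k\pi}^\infty u^{1-2H}\cos u\,\dif u$ are all that is needed, and your identification $\cos(\pi-\pi H)=\sin(\pi H-\tfrac\pi2)$ delivers the constant. For part~(1) you have correctly located the only genuine issue, namely passing from $\sum_{k\ge1}k^{-s}\cos(k\lambda)$ to $\int_0^\infty x^{-s}\cos(x\lambda)\,\dif x$ with an $o(\lambda^{s-1})$ error. Either of your two routes works: Euler--Maclaurin is clean because all boundary and derivative corrections at $x=1$ are $O(1)$ uniformly in $\lambda$, hence negligible against the divergent $\lambda^{s-1}$; Poisson summation is perhaps more transparent structurally, since the periodisation of $|\xi|^{s-1}$ has a single singular copy at the origin and bounded shifted copies. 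One small caveat worth flagging in a written proof: the hypothesis $R_\xi(k)=C|k|^{2H-2}$ literally at $k=0$ is ill-posed, and you rightly treat $R_\xi(0)$ as a separate $O(1)$ term; it would be worth making explicit that this is how the statement must be read.
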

The following theorem is well-known Wick formula (see \cite{Nourdin2012} p.202 (A.2.13) and \cite{Janson1997} Theorem 1.28).
\begin{thm}\label{duochong}
Given a finite set $b$. Denote by $\mathcal{P}(b)$ the class of all partitions of $b$. Assume that $(Y_1,\cdots , Y_n)$ is a zero mean jointly normal random vector, then
\begin{equation*}
    E[Y_1\cdots Y_n]=\left\{
      \begin{array}{ll}
 \sum_{\pi=\{i_k,j_k \}\in \mathcal{P}(n)} \prod_k E[Y_{i_k}Y_{j_k}], & \quad n ~\text{is even }  ;\\
0, &\quad n~\text{is odd }.
 \end{array}
\right.
\end{equation*}Observe that, on the right-hand side, the sum is taken over all partitions $\pi$ of $n$ such that each block of $\pi$ contains exactly two elements.
\end{thm}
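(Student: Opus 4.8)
The plan is to prove the statement, which is the classical Wick/Isserlis formula, by relating the mixed moment to derivatives of the characteristic function of the Gaussian vector and then reading off the combinatorics. Write $\sigma_{ij}=E[Y_iY_j]$ for the entries of the covariance matrix. Since $(Y_1,\dots,Y_n)$ is zero-mean and jointly Gaussian, its characteristic function is
\[
\phi(t)=E\Big[\exp\Big(\mi\sum_{j=1}^n t_jY_j\Big)\Big]=\exp\Big(-\tfrac12\sum_{j,k}t_jt_k\sigma_{jk}\Big)=:e^{Q(t)},\qquad t\in\R^n.
\]
Because a Gaussian vector has finite moments of all orders, the mixed moment is recovered by differentiation,
\[
E[Y_1\cdots Y_n]=\mi^{-n}\,\frac{\partial^n}{\partial t_1\cdots\partial t_n}\phi(t)\Big|_{t=0},
\]
so the whole problem reduces to evaluating this mixed partial derivative of $e^{Q}$ at the origin.

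Next I would expand the derivative using the combinatorial \emph{exponential formula}: for any smooth $Q$,
\[
\frac{\partial^n}{\partial t_1\cdots\partial t_n}e^{Q}=e^{Q}\sum_{\pi\in\mathcal{P}(n)}\prod_{B\in\pi}\Big(\prod_{i\in B}\frac{\partial}{\partial t_i}\Big)Q,
\]
the sum running over all partitions $\pi$ of $\{1,\dots,n\}$ and the inner derivative acting on $Q$ block by block. The advantage of the Gaussian is that $Q$ is a quadratic form, so all its derivatives of order $\ge 3$ vanish identically, while $\partial_iQ|_{0}=0$ kills every block of size one and $e^{Q}|_0=1$. Hence at $t=0$ only the partitions into blocks of size exactly two survive, and each such block $\{i,j\}$ contributes $\partial_i\partial_jQ|_0=-\sigma_{ij}$.

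Then I would collect the signs. If $n$ is odd there is no pair partition of an $n$-element set, so the surviving sum is empty and $E[Y_1\cdots Y_n]=0$. If $n=2m$ is even, every surviving $\pi$ has exactly $m$ blocks and contributes $(-1)^m\prod_{\{i,j\}\in\pi}\sigma_{ij}$; combined with the prefactor $\mi^{-n}=(-1)^m$ the signs cancel, yielding $E[Y_1\cdots Y_n]=\sum_{\pi}\prod_{\{i_k,j_k\}\in\pi}E[Y_{i_k}Y_{j_k}]$, which is exactly the asserted identity.

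The main obstacle is not any hard estimate but justifying the two ingredients cleanly, namely the exponential formula for $\partial_1\cdots\partial_n e^{Q}$ and the passage from moments to derivatives of $\phi$; both are standard, the former being a multivariate Fa\`a di Bruno identity. A fully self-contained alternative avoids characteristic functions entirely: one establishes the Gaussian integration-by-parts identity $E[Y_1\,g(Y)]=\sum_{j}\sigma_{1j}\,E[\partial_jg(Y)]$, applies it with $g=Y_2\cdots Y_n$ to obtain the recursion
\[
E[Y_1\cdots Y_n]=\sum_{j=2}^n\sigma_{1j}\,E\Big[\prod_{\ell\ne 1,j}Y_\ell\Big],
\]
and then inducts on $n$, observing that this recursion — fix the partner of the index $1$, then pair up the remaining indices — is precisely the one satisfied by the sum over pair partitions (with the empty product equal to $1$ and the odd case collapsing to $0$).
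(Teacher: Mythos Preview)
Your proof is correct; both the characteristic-function argument via the exponential (Fa\`a di Bruno) formula and the alternative recursion from Gaussian integration by parts are valid and standard routes to the Wick/Isserlis identity. Note, however, that the paper does not actually prove this theorem: it is stated in the Preliminary section as a well-known result and simply attributed to \cite{Nourdin2012} (p.~202, (A.2.13)) and \cite{Janson1997} (Theorem~1.28), so there is no in-paper proof to compare against.
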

We denote by $\mathcal{M}_d(\mathbb{R})$ the collection of all real $d\times d$ matrices and $\mathcal{N}_d(0, C)$ the law of an $\mathbb{R}^d$-valued Gaussian vector with zero mean and covariance matrix $C$.

Denote $\mathfrak{H}^{\otimes p}$and $\mathfrak{H}^{\odot p}$ as the $p$th tensor product and the $p$th symmetric tensor product of the Hilbert space $\mathfrak{H}$. Let $\mathcal{H}_p$ be $p$th Wiener chaos with respect to $G$. It is defined as the closed linear subspace of $L^2(\Omega)$ generated by the random variables $\{ H_p (G(h)):h\in \mathfrak{H},\parallel h\parallel_{\mathfrak{H}}=1 \}$, where $H_p$ is the $p$th Hermite polynomial defined by
$$H_p(x)=\frac{(-1)^p}{p!}e^{\frac{x^2}2} \frac{d^p}{d x^p} e^{-\frac{x^2}2}, \quad p \ge 1.$$
and $H_0(x)=1$. We have the identity $I_p(h^{\otimes p})=H_p(G(h))$ for any $h\in \mathfrak{H}$. where $I_p(\cdot)$ is the generalized Wiener-It\^{o} stochastic integral. Then the map $I_p$ provides a linear isometry between $\mathfrak{H}^{\odot p}$ (equipped with norm $\frac1{\sqrt{p!}}\parallel \cdot \parallel_{\mathfrak{H}^{\otimes p}}$) and $\mathcal{H}_p$. Here $\mathcal{H}_0 =\mathbb{R}$ and $I_0 (x)=x$, by convention.

The following theorem is multivariate fourth-moment theorem, see \cite{NourdinI2012} Theorem 6.5 or \cite{Nourdin2012} Theorem 6.2.3.
\begin{thm}\label{duochong1}
Let $d \ge 2$ and $q_d, . . . , q_1 \ge 1$ be some fixed integers. Consider vectors
$$F_n=(F_{1,n},\cdots,F_{d,n})=(I_{q_1}(f_{1,n}),\cdots,I_{q_d}(f_{d,n})),\quad   n\ge 1,$$
where $f_{i,n} \in L^2(\mathbb{R}^{q_i} )$ is symmetric function. Let $C \in \mathcal{M}_d (\mathbb{R}) $ be a symmetric matrix. Assume
that $$ \lim_{n\to \infty} E[F_{i,n}F_{j,n}]=C_{i,j}, \quad 1\le i,j \le d.$$ as $n\to \infty$, the following two conditions are equivalent:
\begin{enumerate}
  \item[(1)] $F_n$ converges in law to $\mathcal{N}_d(0, C)$;
 \item[(2)] For every $1 \le i \le d$, we have $F_{i,n}$ converges in law to $\mathcal{N} (0, C_{i,i})$.
\end{enumerate}
\end{thm}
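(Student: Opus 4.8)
The implication (1) $\Rightarrow$ (2) is immediate: convergence of a random vector in law forces convergence of each of its marginals, so $F_{i,n}\to\mathcal{N}(0,C_{i,i})$ for every $i$. The content of the theorem is the reverse implication (2) $\Rightarrow$ (1), and my plan is to combine the univariate fourth moment theorem with the Malliavin--Stein method for multivariate normal approximation.

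First I would recall the contraction operators: for symmetric kernels $f\in\mathfrak{H}^{\odot p}$, $g\in\mathfrak{H}^{\odot q}$ and an integer $0\le r\le p\wedge q$, the contraction $f\otimes_r g$ is an element of $\mathfrak{H}^{\otimes(p+q-2r)}$. The crucial reduction is that, by the univariate Nualart--Peccati fourth moment theorem, hypothesis (2) is equivalent to the statement that all self-contractions of the kernels vanish, namely
\begin{equation*}
\lim_{n\to\infty}\big\| f_{i,n}\otimes_r f_{i,n}\big\|_{\mathfrak{H}^{\otimes 2(q_i-r)}}=0,\qquad 1\le r\le q_i-1,
\end{equation*}
for each fixed $1\le i\le d$.

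Next I would invoke the multivariate Stein-type bound. Letting $D$ denote the Malliavin derivative and $L$ the Ornstein--Uhlenbeck generator, the fact that $F_{j,n}$ lives in the fixed chaos $\mathcal{H}_{q_j}$ gives $-DL^{-1}F_{j,n}=q_j^{-1}DF_{j,n}$, and the integration-by-parts identity yields $E\big[q_j^{-1}\langle DF_{i,n},DF_{j,n}\rangle_{\mathfrak{H}}\big]=E[F_{i,n}F_{j,n}]\to C_{i,j}$. The smooth distance between $F_n$ and $\mathcal{N}_d(0,C)$ is then dominated by the quantities
\begin{equation*}
E\Big[\big(C_{i,j}-q_j^{-1}\langle DF_{i,n},DF_{j,n}\rangle_{\mathfrak{H}}\big)^2\Big],\qquad 1\le i,j\le d,
\end{equation*}
so, the mean already being correct, it remains only to show that the variance of each inner product tends to zero.

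This last point is where the real work lies. Expanding $\langle DF_{i,n},DF_{j,n}\rangle_{\mathfrak{H}}$ by the product formula for multiple integrals writes its variance as a finite sum of squared norms of mixed contractions $\| f_{i,n}\otimes_r f_{j,n}\|$ with $1\le r\le q_i\wedge q_j$. The main obstacle is to control these off-diagonal ($i\ne j$) terms using only the self-contractions supplied by the reduction step; this is achieved by a Cauchy--Schwarz estimate bounding each $\| f_{i,n}\otimes_r f_{j,n}\|$ by a product of diagonal quantities of the form $\| f_{i,n}\otimes_s f_{i,n}\|$ and $\| f_{j,n}\otimes_t f_{j,n}\|$. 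Since these vanish as $n\to\infty$, every cross term vanishes, the variances tend to zero, and the Stein bound forces $F_n\to\mathcal{N}_d(0,C)$, completing the equivalence.
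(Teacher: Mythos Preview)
The paper does not actually prove this statement: Theorem~\ref{duochong1} is quoted verbatim as a background result, with the proof deferred entirely to the cited references (Nourdin, \emph{Selected aspects of fractional Brownian motion}, Theorem~6.5, and Nourdin--Peccati, \emph{Normal approximations with Malliavin calculus}, Theorem~6.2.3). So there is no ``paper's own proof'' to compare your attempt against.

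That said, your outline is a faithful summary of the standard Malliavin--Stein argument that appears in those references (the Peccati--Tudor theorem). The reduction of (2) to vanishing self-contractions via the univariate fourth-moment theorem, the multivariate Stein bound in terms of $\mathrm{Var}\big(q_j^{-1}\langle DF_{i,n},DF_{j,n}\rangle_{\mathfrak{H}}\big)$, and the Cauchy--Schwarz control of mixed contractions by diagonal ones are exactly the ingredients used there. One small point worth making explicit in a full write-up: in the boundary case $r=q_i\wedge q_j$ with $q_i\ne q_j$, one of the diagonal factors produced by Cauchy--Schwarz is a full contraction $\|f_{i,n}\|^2$, which does not vanish; the cross term is still killed because the \emph{other} factor is then a middle contraction $\|f_{j,n}\otimes_{q_j-q_i}f_{j,n}\|$ with $1\le q_j-q_i\le q_j-1$. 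With that caveat, your sketch is correct and matches the literature the paper relies on.
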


\section{Proofs of The Strong Consistency}\label{J}
\noindent{\it Proof of Theorem~\ref{consistent} \,} First, we consider the strong consistency of $\bar{X}$. From Theorem 1.2 in \cite{Chen2023}, then $(Y_t, t \in \mathbb{N}) $  is a stationary and ergodic Gaussian process with zero mean. The ergodicity theorem implies
$$\frac 1n \sum_{t=1}^{n} Y_t \longrightarrow E(Y_t)=0 \quad \text{a.s.}, \quad n \to \infty.$$ Since
$$\lim_{n\to \infty} \frac 1n \zeta \sum_{t=1}^n {\theta}^t =0$$
and \eqref{Xt}, we have
\begin{align}\label{barX}
\bar{X} \longrightarrow \mu \quad \text{a.s.}, \quad n \to \infty.
\end{align}

To show the strong consistency of $\hat{\theta}_n$, we will consider $\frac 1n \sum_{t=1}^n (X_t -\bar{X})^2$. It is clear that
\begin{align}\label{barX2}
\frac 1n \sum_{t=1}^n (X_t -\bar{X})^2 = \frac 1n \sum_{t=1}^n (X_t -\mu)^2 - (\bar{X}-\mu)^2 = \frac 1n \sum_{t=1}^n (Y_t + \zeta\theta^t)^2 - (\bar{X} -\mu)^2.
\end{align}By Theorem 1.2 in \cite{Chen2023}, we have
$$\frac 1n \sum_{t=1}^n (X_t -\mu)^2 \longrightarrow f(\theta) \quad \text{a.s.}, \quad n\to \infty.$$
Substituting last formula and \eqref{barX} into \eqref{barX2}, we obtain
$$\frac 1n \sum_{t=1}^n (X_t -\bar{X})^2 \longrightarrow f(\theta) \quad \text{a.s.}, \quad n\to \infty.$$
By the second moment estimator of $\theta$ (see \eqref{thetaestimator})
$$\hat{\theta}_n = f^{-1} \left(\frac 1n \sum_{t=1}^n (X_t -\bar{X})^2 \right),$$
we deduce that $f^{-1}(\cdot)$ is a continue function. The continuous mapping theorem implies
\begin{align}\label{jielun1}
\lim_{n\to \infty} \hat{\theta}_n =\theta \quad \text{a.s.}
\end{align}

Finally, \eqref{barX} and \eqref{jielun1} imply
$$(\bar{X},\hat{\theta}_n) \longrightarrow (\mu,\theta) \quad \text{a.s.}, \quad n\to \infty. $$Let
$$g(\mu,\theta)=(1-\theta)\mu,$$
Again by the continuous mapping theorem, we conclude that $\hat{\alpha}_n$ is strongly consistent.

\section{The Asymptotic Normality}\label{J2}
Define
\begin{align}\label{Vn}
V_n:= \frac 1{\sqrt{n}} \sum_{k=1}^n [Y_k^2 - E(Y_k^2)], \quad n \ge 1.
\end{align}

See Lemma 3.3-3.4 in \cite{Chen2023}.
\begin{prop}\label{prop1}
Under Hypothesis~\ref{hyp1}, $Y_t$ given in \eqref{Yt} exhibits linear long-range dependence. Namely,
\begin{align}\label{asymptotic1}
h_{Y}(\lambda) \backsim C_{\theta,H} |\lambda|^{1-2H}, \quad \text{as}~\lambda\to 0,
\end{align}
\end{prop}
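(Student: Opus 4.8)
The plan is to recognize $Y_t=\sum_{j\ge 0}\theta^j\xi_{t-j}$ from \eqref{Yt} as the output of a bounded linear filter applied to the stationary noise $\xi$, and to read off the low-frequency behaviour of $h_Y$ directly from that of $h_\xi$. Since $|\theta|<1$, the filter coefficients $a_j=\theta^j$ are absolutely summable, so the transfer function
\begin{equation*}
A(\lambda)=\sum_{j=0}^{\infty}\theta^j e^{-\mathrm{i}j\lambda}=\frac{1}{1-\theta e^{-\mathrm{i}\lambda}}
\end{equation*}
is a well-defined, continuous, and bounded function on $[-\pi,\pi]$ (the denominator never vanishes because $|\theta|<1$). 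The spectral calculus for stationary sequences then yields the factorization $h_Y(\lambda)=|A(\lambda)|^2\,h_\xi(\lambda)$, and everything reduces to understanding this product near $\lambda=0$.

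First I would make the factorization rigorous. Writing $\xi$ through its spectral representation $\xi_t=\int_{-\pi}^{\pi}e^{\mathrm{i}t\lambda}\,dZ(\lambda)$ with $E|dZ(\lambda)|^2=h_\xi(\lambda)\,d\lambda$, I would interchange the absolutely convergent sum over $j$ with the stochastic integral to get $Y_t=\int_{-\pi}^{\pi}e^{\mathrm{i}t\lambda}A(\lambda)\,dZ(\lambda)$, whence $h_Y(\lambda)=|A(\lambda)|^2h_\xi(\lambda)$. Equivalently, and more elementarily, I would verify the same identity at the level of covariances: starting from $R_Y(k)=\sum_{i,j\ge 0}\theta^{i+j}R_\xi(k+i-j)$ and applying the inversion formula $h_Y(\lambda)=(2\pi)^{-1}\sum_k R_Y(k)e^{-\mathrm{i}k\lambda}$, one recovers $h_Y=|A|^2h_\xi$ after rearranging the (absolutely convergent) double series.

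Next I would extract the asymptotics. By Hypothesis~\ref{hyp1} together with the covariance--spectral equivalence recalled above (Theorem 1.3 of \cite{Beran2013}), the noise obeys $h_\xi(\lambda)\backsim C_H|\lambda|^{1-2H}$ with $C_H=\pi^{-1}\Gamma(2H-1)\sin(\pi-\pi H)$. Since $A$ is continuous at the origin with
\begin{equation*}
|A(\lambda)|^2=\frac{1}{1-2\theta\cos\lambda+\theta^2}\longrightarrow\frac{1}{(1-\theta)^2}\neq 0,\qquad \lambda\to 0,
\end{equation*}
multiplying the two asymptotics gives
\begin{equation*}
h_Y(\lambda)=|A(\lambda)|^2h_\xi(\lambda)\backsim \frac{C_H}{(1-\theta)^2}\,|\lambda|^{1-2H},\qquad \lambda\to 0,
\end{equation*}
which is exactly \eqref{asymptotic1} with the explicit constant $C_{\theta,H}=C_H/(1-\theta)^2>0$. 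Because $H\in(\tfrac12,1)$ by Hypothesis~\ref{hyp1}, this shows $Y$ exhibits linear long-range dependence in the sense of Definition~\ref{defn1}.

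I expect the only genuine obstacle to be the justification of the factorization $h_Y=|A|^2h_\xi$, that is, controlling the interchange of the infinite filter sum with the spectral integral (or the rearrangement of the double series for $R_Y$). Both steps reduce to the absolute convergence guaranteed by $|\theta|<1$ together with the boundedness of $h_\xi$ away from the origin, so no delicate estimate is needed; once the factorization is established, the limit $\lambda\to 0$ is immediate since the filter factor $|A(\lambda)|^2$ tends to the finite nonzero constant $(1-\theta)^{-2}$ and therefore does not alter the singular exponent $1-2H$. The passage from the covariance hypothesis \eqref{hyp11} to the spectral statement is precisely the cited equivalence and can be quoted directly.
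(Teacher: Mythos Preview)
Your argument is correct and is the standard transfer-function route: with absolutely summable filter coefficients $a_j=\theta^j$, the output spectral density factors as $h_Y=|A|^2h_\xi$, and since $|A(0)|^2=(1-\theta)^{-2}$ is finite and nonzero while $h_\xi(\lambda)\backsim C_H|\lambda|^{1-2H}$, the stated asymptotics with $C_{\theta,H}=C_H/(1-\theta)^2$ follow immediately. The paper itself does not give a proof of this proposition; it simply refers the reader to Lemmas~3.3--3.4 of \cite{Chen2023}, so there is nothing in the present text to compare against beyond the explicit constant, which you recover exactly.

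One small caution on your alternative ``more elementary'' route via $h_Y(\lambda)=(2\pi)^{-1}\sum_k R_Y(k)e^{-\mathrm{i}k\lambda}$: under long-range dependence $R_Y(k)\backsim C|k|^{2H-2}$ is \emph{not} absolutely summable, so the Fourier series does not converge absolutely and the rearrangement of the resulting triple sum is not as innocent as you suggest. This is only a side remark, since your primary route through the spectral representation $\xi_t=\int e^{\mathrm{i}t\lambda}\,dZ(\lambda)$ avoids the issue entirely: the interchange there is justified by $\sum_j|\theta|^j<\infty$ together with $\int_{-\pi}^{\pi}h_\xi(\lambda)\,d\lambda=R_\xi(0)<\infty$, which is all that is needed.
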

where $C_{\theta,H} = \frac {\Gamma(2H-1) \sin(\pi-\pi H)}{(1-\theta)^2 \pi}$. If $\frac 12 < H < 1$, then the covariation of $Y_t$ satisfies
\begin{align}\label{asymptotic2}
R_Y(k) \backsim C_{\theta,H} |k|^{2H-2},\quad \text{as}~k \to \infty,
\end{align}where $C_{\theta,H} = \frac{B(2H-1,2-2H) \sin(2\pi H-\pi)} {(1-\theta)^2 \pi}$ and $B$ is Beta function.
\begin{prop}\label{prop2}
Let $\bar{X}$ and $\sigma_1^2$ be given in \eqref{hatalpha} and Theorem~\ref{asymptotic}(2) respectively. If $H > \frac 12$ , then as $n\to \infty $,
$$n^{1-H}(\bar{X}-\mu) \xrightarrow{law} \mathcal{N} \left(0, \frac {\sigma_1^2}{(1-\theta)^2} \right). $$

See \cite{NourdinI2012} Theorem 7.3(1) and \cite{Taqqu1975}.
\end{prop}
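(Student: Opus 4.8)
The plan is to prove the central limit theorem for $\bar X - \mu$ by reducing the problem to the stationary long-memory sequence $Y_t$ and then invoking the Taqqu-type non-central/central limit theorem for partial sums of functionals of such sequences. First I would use the representation \eqref{Xt}, which gives
\begin{align*}
n^{1-H}(\bar{X}-\mu)=\frac{1}{n^H}\sum_{t=1}^n Y_t+\frac{\zeta}{n^H}\sum_{t=1}^n\theta^t.
\end{align*}
Since $0<\theta<1$, the geometric sum $\sum_{t=1}^n\theta^t$ is bounded, so the second term is $O(n^{-H})\to 0$ almost surely, hence converges to zero in probability and is asymptotically negligible by Slutsky's theorem. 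Thus the limiting law of $n^{1-H}(\bar X-\mu)$ coincides with that of $n^{-H}\sum_{t=1}^n Y_t$.

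Next I would establish the asymptotic normality of the normalized partial sum $S_n:=n^{-H}\sum_{t=1}^n Y_t$. Because $Y_t=G(h_t)$ lives in the first Wiener chaos (it is a linear functional of the underlying Gaussian field), $S_n$ is itself an element of the first chaos, and in the first chaos a sequence of Gaussian random variables converges in law to a Gaussian as soon as its variance converges. So the whole task is the variance computation: I would write
\begin{align*}
\mathrm{Var}(S_n)=\frac{1}{n^{2H}}\sum_{s,t=1}^n R_Y(s-t)=\frac{1}{n^{2H}}\sum_{|k|<n}(n-|k|)R_Y(k),
\end{align*}
and then use the tail asymptotics $R_Y(k)\backsim C_{\theta,H}|k|^{2H-2}$ from Proposition~\ref{prop1}. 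A standard Riemann-sum/Abel-summation argument shows that for $H>\tfrac12$ this double sum behaves like $n^{2H}$ times a constant, so $\mathrm{Var}(S_n)$ converges to a finite positive limit. Matching that limit against the stated variance $\tfrac{\sigma_1^2}{(1-\theta)^2}$ reduces to plugging the explicit constant $C_{\theta,H}=\tfrac{B(2H-1,2-2H)\sin(2\pi H-\pi)}{(1-\theta)^2\pi}$ into the normalizing factor $\tfrac{1}{H(2H-1)}\int$-type constant; the factor $(1-\theta)^{-2}$ appears exactly because $C_{\theta,H}$ carries it, while the rest assembles into $\sigma_1^2$ as defined in Theorem~\ref{asymptotic}(2).

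The main obstacle is the careful variance asymptotics: one must justify interchanging the limit with the summation and confirm that the slowly-varying remainder (the difference between $R_Y(k)$ and its leading term $C_{\theta,H}|k|^{2H-2}$) does not contribute, which requires a uniform control of the error in \eqref{asymptotic2} together with a dominated-convergence estimate on the Cesàro-weighted tails. Once the limiting constant is pinned down to equal $\sigma_1^2/(1-\theta)^2$, the convergence in law is immediate: either cite the Taqqu/Dobrushin--Major theorem for linear long-memory sequences directly (as the references \cite{NourdinI2012} Theorem 7.3(1) and \cite{Taqqu1975} indicate), or note that $S_n$ is a first-chaos sequence with convergent variance so it is automatically asymptotically Gaussian with the computed variance. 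I would finish by combining this with the Slutsky reduction from the first paragraph to obtain the claimed statement.
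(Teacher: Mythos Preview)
Your proposal is correct and follows essentially the same route as the paper: reduce $n^{1-H}(\bar X-\mu)$ via \eqref{Xt} to $Z_n=n^{-H}\sum_{t=1}^n Y_t$ plus an $O(n^{-H})$ geometric remainder, then observe that $Z_n$ is Gaussian so it suffices to show $E(Z_n^2)\to\sigma_1^2/(1-\theta)^2$, which the paper carries out exactly by the Riemann-sum argument you describe using the asymptotic $R_Y(k)\backsim C_{\theta,H}|k|^{2H-2}$ from Proposition~\ref{prop1}. The paper's only additional detail is splitting the double sum into $|k-j|\le 2$ and $|k-j|\ge 3$ and exhibiting an explicit rate $|l_n-l_\infty|=Cn^{1-2H}$ for the Riemann approximation, but this is precisely the ``careful variance asymptotics'' you already flagged as the main technical point.
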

\begin{proof}
Define
\begin{align}\label{Zn}
Z_n:=n^{-H} \sum_{i=1}^n Y_i.
\end{align}Thanks to \eqref{Xt}, we have
$$n^{1-H} (\bar{X}-\mu)= Z_n + n^{-H}\zeta \sum_{t=1}^{n} \theta^t.$$
It is clear that
\begin{align}\label{Zn1}
n^{-H}\zeta \sum_{t=1}^{n} \theta^t \longrightarrow 0, \quad n\to \infty.
\end{align}Hence, we need only show that
\begin{align}\label{Zn2}
Z_n \stackrel{ {law}}{\longrightarrow} \mathcal{N} \left(0, \frac{\sigma_1^2}{(1-\theta)^2} \right) \quad n \to \infty.
\end{align}We claim that as $n \to \infty$
\begin{align}\label{Zn3}
E(Z_n^2) = \frac 1{n^{2H}} \sum_{k,j=1}^n R_Y (k-j) \longrightarrow \frac {\sigma_1^2}{(1-\theta)^2}.
\end{align}Indeed,decompose the left-hand side as
$$\frac 1{n^{2H}} \sum_{\substack{k,j=1,\cdots,n\\ |k-j|\le 2}} R_Y(k-j) + \frac 1{n^{2H}} \sum_{\substack{k,j=1,\cdots,n\\ |k-j|\ge 3}} R_Y(k-j).$$
Using $|R_Y (k)| \le f(\theta)$, we obtain the first term is bounded by $C \frac 1 {n^{2H-1}}$ and therefore tends to zero as $n \to \infty$ (recall that $\frac 12 < H < 1$).Concerning the second term, one has, \eqref{asymptotic2} implies
\begin{align*}
R_Y(k) \backsim C_{\theta,H} |k|^{2H-2},\quad \text{as}~k \to \infty,
\end{align*}where
$$C_{\theta,H}= \frac{B(2H-1,2-2H) \sin(2\pi H-\pi)} {(1-\theta)^2 \pi} = \frac{H(2H-1)\sigma_1^2}{(1-\theta)^2}.$$
Then
$$\frac 1{n^{2H}} \sum_{\substack{k,j=1,\cdots,n\\ |k-j|\ge 3}} R_Y(k-j) \backsim \frac{H(2H-1)\sigma_1^2}{(1-\theta)^2} l_n , $$where
$$l_n = \sum_{\substack{k,j=1,\cdots,n\\ |k-j|\ge 3}} \left| \frac {k-j}{n} \right|^{2H-2 } \frac 1{n^2}. $$
Therefore, there exists $l_n \to l_{\infty}$ for all $x \in (0, 1)$ , where
$$l_{\infty} = 2\int_{0}^{1}(1-x)x^{2H-2} \dif x.$$

If $g(x) = (1-x)x^{2H-2}$ with $H \in (\frac 12,1)$, it is obvious that it is decreasing monotonically at $(0, 1)$. Let us show that $l_n$ is dominated by an integral function and the convergence rate is obtained
\begin{align*}
    |l_n-l_{\infty}| &=2 \left| \sum_{j=3}^n \left(1-\frac jn \right) \left(\frac jn \right)^{2H-2} \frac 1n - \int_{0}^{1}g(x) \dif x \right| \\
    & =2\left| \sum_{j=3}^n \int_{\frac {j-1}n}^{\frac jn} g\left(\frac jn \right) \dif x -\sum_{j=1}^n \int_{\frac {j-1}n}^{\frac jn}g(x) \dif x \right| \\
    & \le 2\left|\int_{0}^{\frac 2n} f(x) \dif x + \sum_{j=3}^n \int_{\frac {j-1}n}^{\frac jn} \left[ g\left(\frac {j-1}n \right)-g\left(\frac jn \right) \right] \dif x   \right| \\
    & = C \frac 1{n^{2H-1}}.
\end{align*}Therefore, $l_n \to l_{\infty}$, as $n\to \infty$. And this concludes the proof of \eqref{Zn3}.

Characteristic function of $Z_n$
$$E(e^{i v Z_n}) = \exp \left(-\frac 12 v^2 E[Z_n^2] \right)  \xrightarrow{n \to \infty} \exp \left(-\frac 12 v^2 \frac {\sigma_1^2}{(1-\theta)^2} \right), $$
which implies the desired \eqref{Zn2}.
\end{proof}
\noindent{\it Proof of Theorem~\ref{asymptotic} \,} The limiting distribution of $\hat{\theta}_n$ is similar to AR(1) model with zero mean.
Consider
\begin{align}\label{Zn4}
\frac 1{\sqrt{n}} \sum_{n=1}^n \big((X_t-\bar{X} )^2 -f(\theta) \big)=\frac 1{\sqrt{n}} \sum_{n=1}^n \big((X_t-\mu )^2 -f(\theta) \big) -\sqrt{n} (\bar{X} - \mu)^2.
\end{align}By Theorem 1.3 in \cite{Chen2023}, we have
\begin{align}\label{Zn5}
\frac 1{\sqrt{n}} \sum_{n=1}^n \big((X_t-\mu )^2 -f(\theta) \big) \xrightarrow{law} \mathcal{N}(0, \sigma_H^2), \quad n \to \infty.
\end{align}For $H \in( \frac 12 , \frac 34 )$, Proposition~\ref{prop2} implies
\begin{align}\label{Zn8}
\sqrt{n}(\bar{X}-\mu)^2 = n^{2H-\frac 32} \big(n^{1-H}(\bar{X}-\mu)  \big)^2 \xrightarrow{n \to \infty} 0 .
\end{align}Substituting \eqref{Zn5} and \eqref{Zn8} into \eqref{Zn4}, Slutsky's theorem implies
\begin{align}\label{Zn9}
\frac 1{\sqrt{n}} \sum_{t=1}^n  \big((X_t-\bar{X})^2 - f(\theta)  \big) \xrightarrow{law} \mathcal{N}(0,\sigma_H^2), \quad n \to \infty.
\end{align} Delta method and \eqref{jielun1} imply
$$\left[f^{-1} \left( \frac 1n \sum_{t=1}^{n} (X_t - \bar{X})^2   \right)  \right]' = \frac{1}{f'(\hat{\theta}_n)} \xrightarrow{n \to \infty}\frac{1}{f'(\theta)}  \quad \text{a.s.} $$
Thus,
$$\sqrt{n}(\hat{\theta}_n - \theta) \xrightarrow{law} \mathcal{N} \left(0, \frac {\sigma_H^2}{[f'(\theta)]^2} \right).$$

Second, consider the asymptotic normality of $\hat{\alpha}_n$. The difference of $\hat{\alpha}_n$ and $\alpha$ are represented as the follow:
\begin{align*}
    \hat{\alpha}_n - \alpha &= (1-\hat{\theta}_n)\bar{X} - (1-\theta)\mu \\
    &= (1-\hat{\theta}_n)\bar{X}- (1-\theta)\bar{X} + (1-\theta)\bar{X} - (1-\theta)\mu \\
    &= (\theta -\hat{\theta}_n) \bar{X} +(1-\theta) (\bar{X}-\mu).
\end{align*}If $H >\frac 12$, then
\begin{align*}
    \lim_{n\to \infty} n^{1-H} (\hat{\theta}_n - \theta) \bar{X} = \lim_{n \to \infty} \frac{\sqrt{n}}{n^{H-\frac 12}} (\hat{\theta}_n -\theta) \bar{X} = \mu \lim_{n\to \infty} \frac{\sqrt{n}}{n^{H-\frac 12}} (\hat{\theta}_n -\theta) =0.
\end{align*}Therefore,
\begin{align}\label{Zn6}
\lim_{n \to \infty} n^{1-H} (\hat{\alpha}_n -\alpha) =(1-\theta) \lim_{n \to \infty} n^{1-H}(\bar{X}- \mu).
\end{align}Proposition~\ref{prop2} and Slutsky's theorem imply
$$n^{1-H} (\hat{\alpha}_n -\alpha) \xrightarrow{law} \mathcal{N}(0,\sigma_1^2), \quad n\to \infty.$$

Now, consider the jointly asymptotic normality of $\hat{\theta}_n$ and $\hat{\alpha}_n$. $Z_n$ and $V_n$ are given by \eqref{Zn} and \eqref{Vn} respectively. Theorem~\ref{duochong} implies
$$E(Z_n V_n)=n^{-\frac 12-H} \sum_{i=1}^n \left[ \sum_{j=1}^n E(Y_i Y_j^2) - nf(\theta) E(Y_i)  \right]=0.$$
Then, $Z_n$ and $V_n$ are independent. Combining Theorem~\ref{duochong1} with proof of Theorem 1.3 in \cite{Chen2023}, \eqref{Zn2}, we obtain
$$(V_n,Z_n) \xrightarrow{law} \mathcal{N}_2 (0, \Sigma_1), \quad n\to \infty,$$
where
$$\Sigma = \begin{pmatrix}
 \sigma_H^2 & 0 \\
 0 & \frac{\sigma_1^2}{(1 -\theta)^2}
\end{pmatrix}.$$Denote
$$G_{n,3} := \frac 1{\sqrt{n}} \sum_{t=1}^n \big( (X_t - \bar{X})^2 - f(\theta) \big)$$and
\begin{align}\label{Zn7}
G_{n,4} := n^{1-H} (\bar{X}-\mu).
\end{align}Slutsky's theorem together with (3.11)-(3.12) in \cite{Chen2023}, \eqref{Zn1}, \eqref{Zn8} implies
$$(G_{n,3},G_{n,4}) \xrightarrow{law} \mathcal{N}_2(0, \Sigma_1), \quad n\to \infty.$$
Next, since the derivability of $f(\theta)$ at $\theta$, we have
$$f'(\theta) (\hat{\theta}_n -\theta)+ (\hat{\theta}_n -\theta) = \frac 1n \sum_{t=1}^n  (X_t - \bar{X})^2 - f(\theta), $$
$$f'(\theta)\sqrt{n} (\hat{\theta}_n -\theta)+ \sqrt{n}(\hat{\theta}_n -\theta) \frac {o(\hat{\theta}_n -\theta)}{\hat{\theta}_n -\theta}=\frac 1{\sqrt{n}} \sum_{t=1}^n \big( (X_t - \bar{X})^2 - f(\theta) \big).$$
Theorem~\ref{asymptotic}(1) implies that $\sqrt{n} (\hat{\theta}_n -\theta)$ converges in law. Thanks to \eqref{jielun1}, we have
$$\frac {o(\hat{\theta}_n -\theta)}{\hat{\theta}_n -\theta} \longrightarrow  0 \quad \text{a.s.}$$Then
$$\sqrt{n}(\hat{\theta}_n -\theta) \frac {o(\hat{\theta}_n -\theta)}{\hat{\theta}_n -\theta} \xrightarrow{law} 0 .$$Therefore,
$$\lim_{n\to \infty}\sqrt{n}(\hat{\theta}_n -\theta) = \lim_{n\to \infty} \frac 1{f'(\theta)} \frac{1}{\sqrt{n}} \sum_{t=1}^n \big( (X_t-\bar{X})^2 -f(\theta)  \big).$$

Using Cram\'er-Wold Device (\cite{Durrett2010} Theorem 3.9.5) together with \eqref{Zn6}, we conclude
\begin{align*}
    \lim_{n \to \infty} E \left( e^{i(v_1 G_{n,1}+v_2 G_{n,2})} \right) &= \lim_{n \to \infty} E \left( e^{i(v_1\frac 1{f'(\theta)} G_{n,3}+v_2(1-\theta) G_{n,4})} \right) \\
    &= \lim_{n \to \infty} \exp\left[-\frac 12 \left(v_1^2\frac 1{[f'(\theta)]^2} E[G_{n,3}^2]+v_2^2(1-\theta)^2 E[G_{n,4}]^2\right) \right] \\
    &= \exp\left[-\frac 12 \left(v_1^2\frac {\sigma_H^2}{[f'(\theta)]^2} +v_2^2\sigma_1^2\right) \right].
\end{align*}Then
$$(G_{n,1},G_{n,2}) \xrightarrow{law} \mathcal{N}_2 (0,\Sigma), \quad n\to \infty.$$

\section{Simulation}\label{J3}
In this section, we will test the performance of estimators $\hat{\theta}_n$ and $\hat{\alpha}_n$ under limited samples, and test their joint distribution performance. By the delta method, we know that the asymptotic normal of the moment estimator $\hat{\theta}_n$ is equivalent to that of the random variable
$$\frac 1{\sqrt{n}} \sum_{n=1}^n \big((X_t-\bar{X} )^2 -f(\theta) \big),$$
please refer to \eqref{Zn9}. Since the inverse function $\theta = f^{-1}(\cdot)$ does not have an analytic expression, it is convenience to plot the figure of the above random variable to show the asymptotic normality. The fGn noise and the ARFIMA(0 ,d ,0) noise data used in this section are all generated by the R package "longmemo" in \cite{Beran1994}.

In this experiment, the data generated with $\theta = 0.6$, $n = 3000$, and each group of data will be copied $M = 10000$ times by Monte Carlo method.In order to better test the estimator, we simulate fGn noise with $H = 0.58$ (see subscript (a) of all the following figures) and the ARFIMA(0 ,d ,0) noise with $d = 0.08$ (see subscript (b) of all the following figures) by Monte Carlo method. We have the asymptotic normality for that random variable, as shown in Figure.\ref{tu2}. And we have the asymptotic normality for that the estimator $\hat{\alpha}_n$, as shown in Figure.\ref{tu3}. Thus the joint asymptotic normality of the two moment estimators $\hat{\theta}_n$ and $\hat{\alpha}_n$, as shown in Figure.\ref{tu4}.
\begin{figure}[htbp]
    \centering
    \subfigure[]{
    \includegraphics[width=5.5cm]{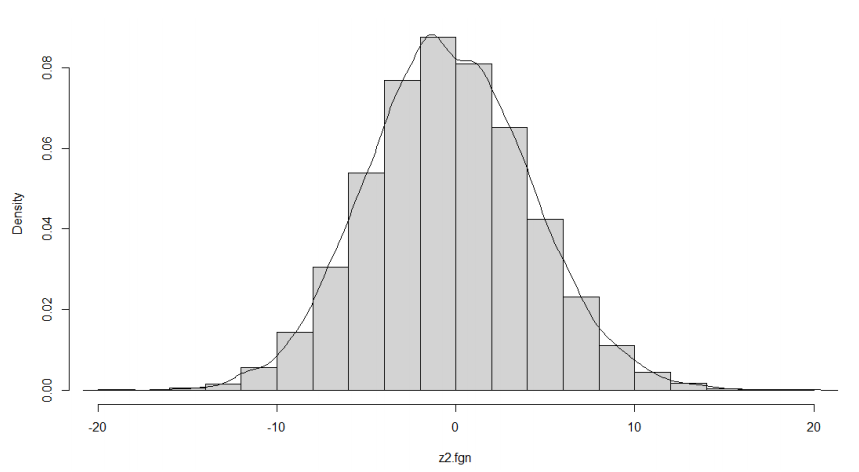}
    }
    \subfigure[]{
    \includegraphics[width=5.5cm]{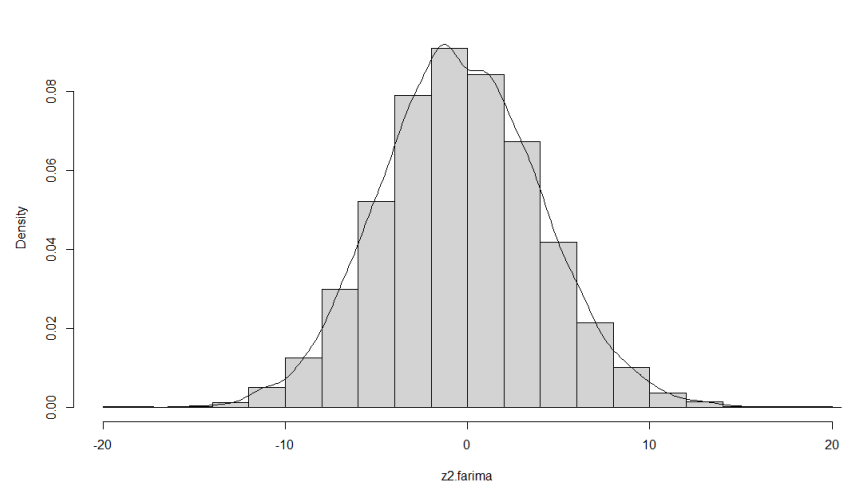}
    }
    \caption{The asymptotic normality of $\frac 1n \sum_{t=1}^n (X_t - \bar{X})^2 $.}
    \label{tu2}
\end{figure}
\begin{figure}[htbp]
    \centering
    \subfigure[]{
    \includegraphics[width=5.5cm]{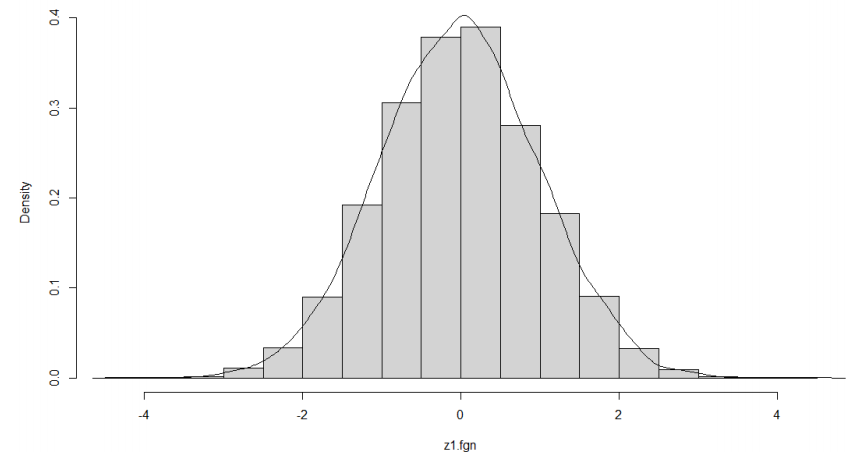}
    }
    \subfigure[]{
    \includegraphics[width=5.5cm]{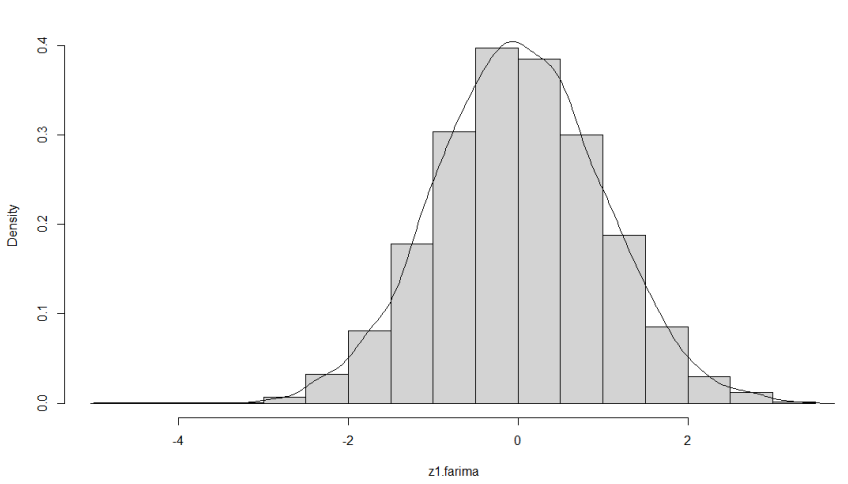}
    }
    \caption{The asymptotic normality of $\hat{\alpha}_n $.}
    \label{tu3}
\end{figure}
\begin{figure}[htbp]
    \centering
    \subfigure[]{
    \includegraphics[width=5.5cm]{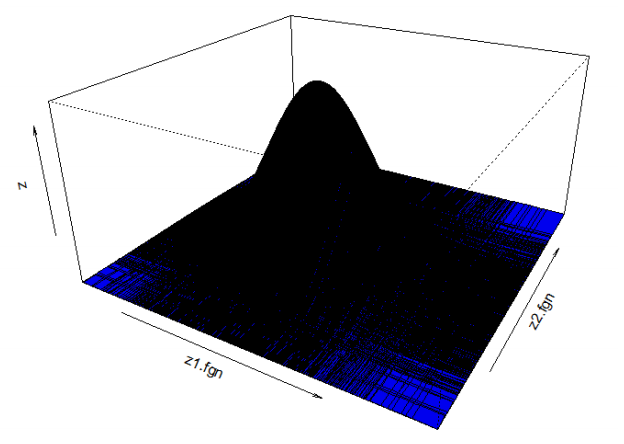}
    }
    \subfigure[]{
    \includegraphics[width=5.5cm]{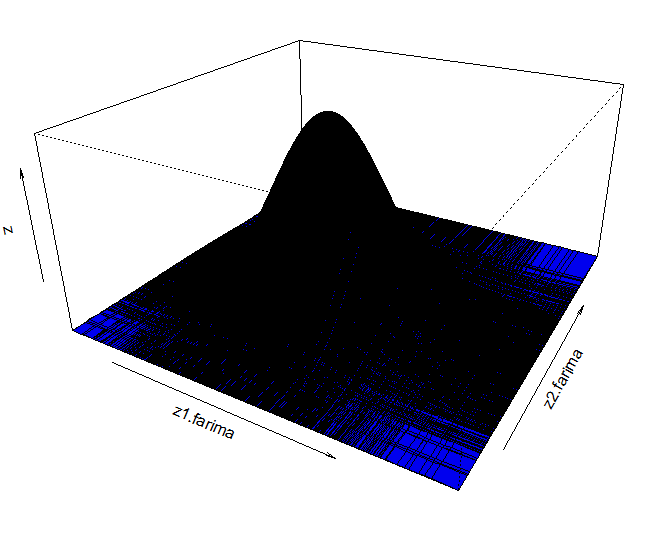}
    }
    \caption{The joint asymptotic distributions of $\frac 1n \sum_{t=1}^n (X_t - \bar{X})^2 $ and $\hat{\alpha}_n $.}
    \label{tu4}
\end{figure}

\end{document}